\documentclass[a4paper,10pt]{article}

\usepackage[charter]{mathdesign}

\usepackage{amsmath,amsthm,adjustbox}
\usepackage{dsfont}

\usepackage{url}

\usepackage[utf8]{inputenc}

\usepackage{xcolor}

\newtheorem{theorem}{Theorem}[section]

\newtheorem{lemma}[theorem]{Lemma}

\newtheorem{remark}[theorem]{Remark}

\newtheorem{example}[theorem]{Example}
\newtheorem{assumption}[theorem]{Assumption}

\newcommand{\N}{\mathbb{N}}
\newcommand{\Z}{\mathbb{Z}}
\newcommand{\Q}{\mathbb{Q}}
\newcommand{\R}{\mathbb{R}}


\newcommand{\B}{\mathcal{B}}

\newcommand{\F}{\mathcal{F}}

\newcommand{\X}{\mathcal{X}}

\newcommand{\U}{\mathcal{U}}

\renewcommand{\P}{\mathbb{P}}
\newcommand{\E}{\mathbb{E}}

\newcommand{\law}[1]{\mathrm{Law}(#1)}

\newcommand{\ind}{\mathds{1}}

\newcommand{\eps}{\varepsilon}
\newcommand{\la}{\lambda}
\newcommand{\ga}{\gamma}
\newcommand{\ka}{\kappa}

\newcommand{\dint}{\mathrm{d}}  

\newcommand{\leb}[1]{\mathrm{Leb}\left(#1\right)}
\newcommand{\ul}[1]{\underline{#1}}
\newcommand{\ol}[1]{\overline{#1}}

\title{Ergodic aspects of trading with threshold strategies\thanks{Both authors benefitted 
from the support of the ``Lend\"ulet'' grant LP 2015-6 of 
		the Hungarian Academy of Sciences.}}

\author{
	Attila Lovas\thanks{Alfr\'ed R\'enyi Institute of Mathematics and 
	Budapest University of Technology and Economics, Budapest, Hungary}
	\footnote{Corresponding author email: lovas.attila@renyi.hu}
	\and 
	Mikl\'os R\'asonyi\thanks{Alfr\'ed R\'enyi Institute of Mathematics and E\"otv\"os Lor\'and University, Budapest, Hungary}}

\date{\today}

\begin{document}

\maketitle

\begin{abstract} 
To profit from price oscillations, investors frequently use threshold-type strategies where changes in the portfolio 
position are triggered by some indicators reaching prescribed levels. 

In this paper we investigate threshold-type strategies in the context of ergodic control.
We make the first steps towards their optimization by proving 
ergodic properties of related functionals. Assuming Markovian price increments
satisfying a minorization condition and (one-sided) 
boundedness we show, in particular, that for given thresholds, the 
distribution of the gains converges in the long run. 

We also extend 
recent results on the stability of overshoots of random walks from the i.i.d.\ increment case to Markovian increments, 
under suitable conditions. 
\end{abstract}

\noindent\textbf{Keywords:} Minorization, random walk, stochastic stability, threshold-type strategies,
optimal investment.

\section{Introduction}\label{sec:intro}

Perhaps the most naive approach to speculative trading is trying to ``buy low and sell high'' a 
given financial asset. More refined versions of such strategies are actually widely used by practitioners, see 
\cite{www1,www2,www3}. Their various aspects have been analysed in several papers, see e.g.\ \cite{dai,zervos,zz,zhang} and
our literature review in Section 3. 

We intend to study such strategies in a different setting: that of ergodic control.
A rigorous mathematical formulation turns out to pose thorny questions about the ergodicity of certain processes, as 
we shall point out below.

The present article starts to build a reasonable and mathematically sound framework for investigating such problems. We establish that 
key functionals converge to an invariant distribution and obey a law of large numbers. We are unaware of 
any previous study that would tackle these questions. Our results may serve as a basis for further 
related investigations, using techniques of ergodic and adaptive control.

We also investigate a closely related object, studied in \cite{mijatovic-vysotsky,mijatovic2020stationary}: the so-called 
overshoot process. We extend certain results from \cite{mijatovic-vysotsky} from i.i.d.\ to Markovian summands. 

The paper is organized as follows: 
In Section \ref{sec:stability}, we state our main results on the stability of level crossings and related quantities of 
random walks with Markovian martingale differences satisfying minorization and (one-sided) boundedness. In Section 
\ref{sec:trading}, we explain the financial setting and the significance of our results in studying optimal trading 
with threshold strategies. Section \ref{sec:overshoots} presents our results about overshoots. Section 
\ref{sec:proofs} contains the proofs of the main results. 
Section \ref{sec:con} dwells upon future
directions of research.

\section{Stability of level crossings}\label{sec:stability}

Let $M>0$ and let $(X_n)_{n\in\N}$ be a time-homogeneous Markov chain on the probability space $(\Omega,\F,\P)$ with state space 
$\X:=(-\infty,M]$. Its transition kernel is denoted 
$P:\X\times \B (\X)\to [0,1]$. We consider the random walk
\begin{equation}\label{eq:rw}
	S_n = S_0 + X_1 + \ldots + X_n,
\end{equation}
where $S_0$ is a random variable independent of $\sigma (X_k:\,k\in\N)$. 

The next minorization condition ensures that the chain jumps, with positive probability, in 
one step to a small neighborhood of zero independently of the initial state. Moreover, the random movements 
of $S$ have a diffuse component.

\begin{assumption}\label{as:jmpDoeblin}
	There exist $\alpha,h>0$ such that, for all $x\in\X$ and $A\in\B (\X)$,
	\begin{equation}\label{minocond}
	P (x,A)\ge \alpha\ell(A)
	\end{equation} 
	holds where 
	\begin{equation*}
		\ell(A):=\frac{1}{2h}\leb{[-h,h]\cap A}
	\end{equation*}
	is the normalized Lebesgue measure on $[-h,h]$.
\end{assumption}

\begin{lemma}\label{lem:XlimD}
	Under Assumption \ref{as:jmpDoeblin}, there is a unique probability $\pi_{*}$ on $\B(\X)$
	such that $\law{X_{n}}\to \pi_{*}$ in total variation as $n\to\infty$, at a geometric
	speed.	
\end{lemma}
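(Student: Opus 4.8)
The plan is to recognize Assumption~\ref{as:jmpDoeblin} as a uniform one-step Doeblin minorization (the whole state space $\X$ is a ``small set'' with minorizing probability measure $\ell$ and constant $\alpha$) and then to run the standard contraction-in-total-variation argument. Note first that putting $A=\X$ in \eqref{minocond} forces $\alpha\le 1$; if $\alpha=1$ then $P(x,\cdot)\equiv\ell(\cdot)$ and the claim is trivial, so assume $0<\alpha<1$ and perform the splitting
\[
P(x,A)=\alpha\,\ell(A)+(1-\alpha)\,Q(x,A),\qquad Q(x,A):=\frac{P(x,A)-\alpha\,\ell(A)}{1-\alpha},
\]
where Assumption~\ref{as:jmpDoeblin} guarantees that $Q$ is a genuine transition kernel on $\X$.

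The core step is the total-variation contraction: for any probability measures $\mu,\nu$ on $\B(\X)$,
\[
\dtv\!\left(\mu P,\nu P\right)\le(1-\alpha)\,\dtv\!\left(\mu,\nu\right).
\]
Since $\mu$ and $\nu$ have the same total mass, the $\alpha\,\ell$ terms cancel upon subtraction, leaving $\mu P-\nu P=(1-\alpha)\int_\X Q(x,\cdot)\,(\mu-\nu)(\dint x)$; writing $\mu-\nu=(\mu-\nu)^+-(\mu-\nu)^-$ (equal masses, each equal to $\dtv(\mu,\nu)$ in the sup-normalization) and using $Q(x,\cdot)\le 1$, the right-hand side has total variation at most $(1-\alpha)\,\dtv(\mu,\nu)$, as claimed. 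One may phrase this probabilistically as a coupling: at each step, with probability $\alpha$ the two copies jump to a common state sampled from $\ell$, otherwise they evolve conditionally independently via $Q$; they have coalesced by time $n$ off an event of probability $\le(1-\alpha)^n$.

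To conclude, I would note that the set of probability measures on $\B(\X)$ is complete for $\dtv$, so by Banach's fixed point theorem the contraction $\mu\mapsto\mu P$ admits a unique fixed point $\pi_*$, necessarily the unique invariant law. Iterating and using time-homogeneity,
\[
\dtv\!\left(\law{X_n},\pi_*\right)=\dtv\!\left((\law{X_0})P^n,\pi_*P^n\right)\le(1-\alpha)^n\,\dtv\!\left(\law{X_0},\pi_*\right)\le(1-\alpha)^n ,
\]
which is geometric convergence with rate $1-\alpha$. I do not anticipate a genuine obstacle here; the only mildly delicate points --- well-definedness of $Q$, the passage from signed to probability measures in the contraction, and completeness of $(\{\text{probability measures}\},\dtv)$ --- are routine. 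Finally, it is worth stressing that this lemma uses only the minorization \eqref{minocond}: the specific form of $\ell$ as normalized Lebesgue measure (the ``diffuse component'' of the increments) plays no role here and is needed only for the later results on overshoots.
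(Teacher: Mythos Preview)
Your argument is correct and is essentially the standard Doeblin contraction proof that underlies the result the paper invokes. The paper's own proof simply observes that \eqref{minocond} makes the whole state space $\X$ a small set and then cites Theorem~16.2.2 of \cite{mt} for uniform ergodicity; you have written out the content of that citation by hand (splitting $P=\alpha\ell+(1-\alpha)Q$, deriving the $(1-\alpha)$-contraction in $\dtv$, and applying Banach's fixed point theorem), which yields the same conclusion with the explicit rate $(1-\alpha)^n$.
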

\begin{proof} Assumption \eqref{minocond} implies that the state space $\mathcal{X}$ is a small set.
	Hence the chain is uniformly ergodic by Theorem 16.2.2 of \cite{mt}. 
\end{proof}

\begin{assumption}\label{as:zeroMean}
For each $z\in\mathcal{X}$, 
	\begin{equation*}
	\int_{\X} |x| \,P(z,\dint x)<\infty
	\end{equation*}
and
	\begin{equation*}
	\int_{\X} x \,P(z,\dint x)=0,
	\end{equation*}
\end{assumption}

\begin{remark}\label{martingale} {\rm Clearly, Assumption \ref{as:zeroMean} guarantees that $X_n$, $n\in\mathbb{N}$, if integrable, is
a martingale difference sequence and the limit distribution of Lemma \ref{lem:XlimD} has zero mean, that is,
	\begin{equation*}
	\int_{\X} x \,\pi_{*}(\dint x)=0.
	\end{equation*}
}
\end{remark} 

Let us fix thresholds $\ul{\theta},\ol{\theta}\in\R$, satisfying $\ul{\theta}<0<\ol{\theta}$. Furthermore, 
we define the sequence of crossing times corresponding to $\ul{\theta}$ and $\ol{\theta}$ by 
the recursion 
$L_{0}:=0$ and for $n\in\N$,
\begin{equation}\label{eq:defLT}
T_{n+1}:=\min\{k>L_{n}:\ S_{k}<\ul{\theta}\},\quad
L_{n+1}:=\min\{k>T_{n+1}:\ S_{k}>\ol{\theta}\}.
\end{equation}

\begin{lemma}\label{lem:TLfinite} Let $EX_{n}^{2}<\infty$ hold for all $n\geq 1$. (This is the case, in particular,
if the $X_{n}$ are bounded.) 
	Under Assumptions \ref{as:jmpDoeblin} and \ref{as:zeroMean}, the random variables $T_{n},L_{n}$ 
	are well-defined and almost surely finite.
\end{lemma}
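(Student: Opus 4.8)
The plan is to establish that, almost surely, $\limsup_{n}S_{n}=+\infty$ and $\liminf_{n}S_{n}=-\infty$. Granting this, the claim follows by induction along \eqref{eq:defLT}: on the almost sure event on which both $\{k:S_{k}<\ul\theta\}$ and $\{k:S_{k}>\ol\theta\}$ are infinite, each successive minimum in \eqref{eq:defLT} is the minimum of a non‑empty set (if $L_{n}<\infty$ then $\{k>L_{n}:S_{k}<\ul\theta\}\ne\emptyset$, hence $T_{n+1}<\infty$, and then $\{k>T_{n+1}:S_{k}>\ol\theta\}\ne\emptyset$, hence $L_{n+1}<\infty$). Since $S_{0}$ is independent of $(X_{k})_{k}$ and a.s.\ finite, we may assume it constant, so that $(S_{n})$ is a true martingale with respect to a filtration $(\F_{n})$; note also that Assumption~\ref{as:zeroMean} together with $X_{n}\le M$ forces $\E[X_{n}^{+}\mid\F_{n-1}]\le M$, hence $\E[|X_{n}|\mid\F_{n-1}]\le 2M$ by the vanishing conditional mean.

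The crux is a regeneration structure for $(S_{n})$. Because \eqref{minocond} holds for \emph{every} $x\in\X$, the whole state space is a small set, so the Nummelin splitting is available in its simplest form: on a possibly enlarged probability space one constructs $(X_{n})$, with unchanged law, together with i.i.d.\ coins $B_{n}\sim\mathrm{Ber}(\alpha)$ (which we add to $\F_{n}$) such that, conditionally on $\F_{n-1}$, $X_{n}\sim\ell$ on $\{B_{n}=1\}$ and $X_{n}\sim(P(X_{n-1},\cdot)-\alpha\ell)/(1-\alpha)$ on $\{B_{n}=0\}$. Let $\rho_{1}<\rho_{2}<\cdots$ enumerate $\{n:B_{n}=1\}$; the gaps $\rho_{k+1}-\rho_{k}$ are i.i.d.\ geometric of mean $1/\alpha$, and at every $\rho_{k}$ the chain restarts, $X_{\rho_{k}}\sim\ell$ being independent of the past, whence the block increments $Z_{k}:=S_{\rho_{k+1}}-S_{\rho_{k}}$, $k\ge1$, are i.i.d. Writing $Z_{k}=\sum_{n\ge1}X_{n}\ind_{\{\rho_{k}<n\le\rho_{k+1}\}}$ and using $\{\rho_{k}<n\le\rho_{k+1}\}\in\F_{n-1}$, the bound $\E[|X_{n}|\mid\F_{n-1}]\le 2M$ together with $\sum_{n}\P(\rho_{k}<n\le\rho_{k+1})=1/\alpha$ gives $\E|Z_{k}|\le 2M/\alpha<\infty$, and replacing $|X_{n}|$ by $X_{n}$ and invoking the vanishing conditional mean gives $\E Z_{k}=0$. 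Moreover $Z_{k}$ is the sum of a term determined before time $\rho_{k+1}$ and the independent variable $X_{\rho_{k+1}}\sim\ell$, so its law is non‑degenerate (in fact absolutely continuous).

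By the Chung--Fuchs theorem, the one‑dimensional random walk $W_{K}:=\sum_{k=1}^{K}Z_{k}$ has i.i.d.\ integrable centred increments and is therefore recurrent; being non‑degenerate it oscillates, $\liminf_{K}W_{K}=-\infty$ and $\limsup_{K}W_{K}=+\infty$ a.s. Since $S_{\rho_{K+1}}=S_{\rho_{1}}+W_{K}$ with $S_{\rho_{1}}$ an a.s.\ finite random variable, the sequence $(S_{\rho_{K}})_{K}$ oscillates too, and as $(\rho_{K})_{K}$ is a subsequence of $\N$ we obtain $\limsup_{n}S_{n}=+\infty$ and $\liminf_{n}S_{n}=-\infty$ a.s., which concludes the argument.

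The step I expect to require the most care is the implementation of the splitting and the verification that the $Z_{k}$ are genuinely i.i.d., integrable, and centred; the Chung--Fuchs input and the concluding induction are then routine. One could obtain $\limsup_{n}S_{n}=+\infty$ more cheaply — the conditional Borel--Cantelli lemma and the minorization force $|X_{n}|>h/2$ infinitely often, so $(S_{n})$ does not converge, and a martingale with increments bounded above converges a.s.\ on $\{\sup_{n}S_{n}<\infty\}$ — but the matching statement for $\liminf_{n}S_{n}$ appears to genuinely need the diffuse downward moves supplied by \eqref{minocond}, so the regeneration route is the cleaner one.
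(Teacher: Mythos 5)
Your route is genuinely different from the paper's. The paper argues directly with the martingale $S$: on each event $\{L_{n}=k\}$ the minorization \eqref{minocond} forces the conditional variance of every increment to be at least $\alpha h^{2}/3$, so the conditional quadratic variation diverges and Proposition VII-3-9 of \cite{neveu} gives $\liminf_{j}S_{k+j}=-\infty$ (and symmetrically $\limsup=+\infty$) on that event; this is exactly where the hypothesis $EX_{n}^{2}<\infty$ is used. Your regeneration--plus--Chung--Fuchs argument is longer, but it buys something real: it never invokes square-integrability, only the conditional bound $\E[|X_{n}|\mid\F_{n-1}]\le 2M$, which you correctly derive from the one-sided boundedness and the vanishing conditional mean. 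So your proof, once repaired as below, establishes the lemma without the $L^{2}$ hypothesis. The reduction to oscillation of $S$, the conditioning on $S_{0}$, the use of the whole state space as a small set for the splitting, and the Chung--Fuchs conclusion are all sound.

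There is, however, one genuine error, precisely at the step you flagged as delicate: the variables $Z_{k}:=S_{\rho_{k+1}}-S_{\rho_{k}}=\sum_{n=\rho_{k}+1}^{\rho_{k+1}}X_{n}$ are identically distributed but \emph{not} independent. The last summand of $Z_{k-1}$ is $X_{\rho_{k}}$, while on $\{B_{\rho_{k}+1}=0\}$ the first summand of $Z_{k}$ is drawn from the residual kernel $q(X_{\rho_{k}},\cdot)$, so consecutive blocks communicate through $X_{\rho_{k}}$; the sequence $(Z_{k})_{k}$ is only $1$-dependent, and Chung--Fuchs does not apply to it as stated. The fix is the standard block convention: take the blocks $(X_{\rho_{k}},\dots,X_{\rho_{k+1}-1})$, which begin \emph{at} a regeneration time and end just before the next one. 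These are i.i.d., since $X_{\rho_{k}}\sim\ell$ is independent of the past and the rest of block $k$ is generated from $X_{\rho_{k}}$ and randomness indexed in $(\rho_{k},\rho_{k+1})$. Accordingly set $\tilde{Z}_{k}:=S_{\rho_{k+1}-1}-S_{\rho_{k}-1}$. Integrability and centering follow as in your computation (treating the summand $X_{\rho_{k}}\sim\ell$ separately, and noting that conditionally on $B_{n}=0$ the residual kernel still has zero mean), and non-degeneracy is immediate because on the event $\{B_{\rho_{k}+1}=1\}$, which has probability $\alpha$ and is independent of $X_{\rho_{k}}$, one has $\tilde{Z}_{k}=X_{\rho_{k}}\sim\ell$, so the law of $\tilde{Z}_{k}$ dominates $\alpha\ell$. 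The telescoping sum then gives oscillation of $S$ along the subsequence $(\rho_{K}-1)_{K}$, and the remainder of your argument goes through unchanged.
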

\begin{proof}

	We will prove the statement inductively, the first step being trivial since $L_{0}<\infty$.{}
	Assume that the statement has been shown for $L_{0},L_{1},T_{1},\ldots,L_{n}$ and
	we go on showing it for $T_{n+1}$ and $L_{n+1}$.

In the induction step, we work on the events $B_{k}:=\{L_{n}=k\}$, $k\in\N$ separately. Fixing $k$, 
the process $M_{j}:=\sum_{l=0}^{j}\ind_{B_{k}}X_{k+l}$, $j\in\N$ is a square-integrable martingale 
(remember Remark \ref{martingale}) with conditional quadratic variation 
	\begin{equation*}
	Q_{j}:=\E[(M_{j}-M_{j-1})^{2}\mid\sigma(M_{i},i\leq j-1)]\geq \ind_{B_{k}}\frac{\alpha}{2h}\int_{-h}^{h}y^{2}\, dy=
	\ind_{B_{k}}\frac{\alpha h^{2}}{3},
	\end{equation*}
	using Assumption \ref{as:jmpDoeblin}. Hence $\sum_{j=1}^{\infty}Q_{j}=\infty$ almost surely on $B_{k}$. Proposition VII-3-9. of \cite{neveu}
	implies that $\liminf_{j\to\infty}M_{j}=-\infty$ on $B_{k}$. It follows that, on $B_{k}$, almost
	surely 
	\begin{equation*}
	\liminf_{j\to\infty}\,(S_{0}+X_{1}+\ldots+X_{L_{n}}+\ldots+X_{L_{n}+j})=-\infty 
	\end{equation*} 
	which implies, in particular, $T_{n+1}<\infty$ on $B_{k}$.
A similar argument establishes that also $P(L_{n+1}<\infty)=1$.
\end{proof}

Although $X_{T_1}$ can be positive when $S_0<\ul{\theta}$, for $n\ge 2$, $X_{T_n}$ is always negative. 
Moreover it is also straightforward to verify that the process 
\begin{equation*}
	U_{n}:=(X_{T_{n}},S_{T_{n}},X_{L_{n}},S_{L_{n}},L_n-T_n),\ n\geq 2,
\end{equation*}
is a time-homogeneous Markov chain on the state space 
\begin{equation*}
\U:=(-\infty,0)\times (-\infty,\ul{\theta})\times (0,M]\times (\ol{\theta},\ol{\theta}+M)\times 
(\N\setminus \{0\}).
\end{equation*}

The next theorem states that under our standing assumptions, the law of $U_n$ converges to a unique limiting law, as 
$n\to\infty$, moreover, bounded functionals of $U_n$ admit an ergodic behavior.

\begin{theorem}\label{cor:LLN} Let $EX_{n}^{2}<\infty$ hold for all $n\geq 1$.
	Under Assumptions \ref{as:jmpDoeblin} and \ref{as:zeroMean}, there exists a probability $\upsilon$ on $\B(\U)$ such that $\law{U_n}\to\upsilon$ at a geometric speed in total variation as $n\to\infty$.
	Furthermore, for any bounded and measurable function $\phi:\U\to\R$,
	\begin{equation}\label{lili}
		\frac{\sum_{j=1}^{n}\phi(U_{j})}{n}\to \int_{\U}\phi(u)\,\upsilon(\dint u),\ n\to\infty,
	\end{equation}	
almost surely.
\end{theorem}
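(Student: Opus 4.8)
The plan is to establish that the chain $(U_n)_{n\geq 2}$ is uniformly ergodic, from which both the total-variation convergence at geometric speed and the law of large numbers follow by standard Markov chain theory. First I would verify that $U_n$ is indeed a time-homogeneous Markov chain on $\U$ (as the paper already asserts is straightforward) and that by Lemma \ref{lem:TLfinite} the successive crossing times are a.s. finite, so the chain is well-defined. The heart of the matter is to exhibit a minorization condition for the transition kernel of $U_n$, i.e. to show that $\U$ (or at least some set reached with uniform positive probability in a bounded number of steps) is a small set. Once that is in hand, Theorem 16.2.2 (or 16.0.2) of \cite{mt} gives uniform ergodicity, hence a unique invariant law $\upsilon$ with $\law{U_n}\to\upsilon$ geometrically fast in total variation; and the strong law \eqref{lili} for bounded measurable $\phi$ follows from the ergodic theorem for positive Harris chains (e.g. Theorem 17.0.1 of \cite{mt}), noting that any initial distribution is dominated appropriately because the convergence is uniform in the starting point.

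The main work, therefore, is the minorization estimate for $U_n$. Here is how I would get it. Starting from any state $u=(x,s,x',s',\ell)\in\U$ at time $L_n=k$, I want to control, with probability bounded below uniformly in $u$, the value of $U_{n+1}=(X_{T_{n+1}},S_{T_{n+1}},X_{L_{n+1}},S_{L_{n+1}},L_{n+1}-T_{n+1})$. The idea is to use Assumption \ref{as:jmpDoeblin}: at each step the increment $X_j$ has a component that is $\alpha$ times normalized Lebesgue measure on $[-h,h]$, \emph{independently of the past}. So I would run the walk for a controlled number of steps: first, by repeatedly using the diffuse $\ell$-component, force $S$ to drop below $\ul\theta$ within a number of steps that can be bounded in terms of $M$, $h$, $\alpha$, $\ul\theta$, $\ol\theta$ (each ``good'' step moves $S$ down by at least some fixed amount with probability at least $\alpha/2$, say, since $M$ bounds the upward size of an increment only one-sidedly — but one-sided boundedness from above is exactly what keeps $S$ from jumping back up uncontrollably). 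This pins down $X_{T_{n+1}}$ and $S_{T_{n+1}}$ to lie in a prescribed small rectangle with uniform positive probability, using the $\ell$-component at the crossing step. Symmetrically — now with the increments only bounded above by $M$, which is what we need to guarantee the overshoot $S_{L_{n+1}}-\ol\theta$ cannot exceed $M$ — drive $S$ back up across $\ol\theta$ in a bounded number of steps, landing $X_{L_{n+1}}$, $S_{L_{n+1}}$, and the elapsed time $L_{n+1}-T_{n+1}$ in a prescribed set with uniform positive probability. Concatenating these finitely many steps and intersecting the events yields a uniform lower bound $P_U(u,\cdot)\geq \beta\,\nu(\cdot)$ for some constant $\beta>0$ and some fixed nontrivial measure $\nu$ on $\U$; this is the required small-set property. (One must be slightly careful that the number of steps needed is genuinely bounded over all $u\in\U$: the downward phase needs $S$ to start at most at $\ol\theta+M$, which holds by definition of $\U$, and $X$ at most $M$; the upward phase needs $S$ to start above $\ul\theta-\text{const}$, which holds once we have controlled $S_{T_{n+1}}$ — hence the order of the two phases matters.)

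I expect the main obstacle to be precisely this uniformity of the number of steps and the positivity of the probability, uniformly over the unbounded state space $\U$. The state components $x\in(-\infty,0)$ and $s\in(-\infty,\ul\theta)$ are unbounded below, so a naive argument that ``from state $u$ it takes at most $N(u)$ steps'' would give $N(u)\to\infty$. The resolution is that the transition kernel of $U_n$ depends on the previous state $U_n$ only through $(X_{L_n},S_{L_n})$ — and $S_{L_n}$ lies in the \emph{bounded} interval $(\ol\theta,\ol\theta+M)$ and $X_{L_n}\in(0,M]$, while the first two coordinates $(X_{T_n},S_{T_n})$ play no role in generating $U_{n+1}$. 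So the relevant ``memory'' lives in a bounded set, and the step-count and probability bounds can be made uniform there. I would make this dependence explicit early on. A secondary, more routine point is checking integrability/measurability so that the constructed event really has the form needed for the minorization (this is where $EX_n^2<\infty$ and Assumption \ref{as:zeroMean} enter, via Lemma \ref{lem:TLfinite}, to ensure $T_{n+1},L_{n+1}<\infty$ a.s. so that the ``bounded number of steps'' event has positive probability in the first place). Once uniform ergodicity is established, the remaining deductions — geometric TV convergence and the SLLN — are immediate citations to \cite{mt}.
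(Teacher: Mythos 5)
Your proposal is correct and follows essentially the same route as the paper: a uniform minorization for the kernel of $U_n$ obtained by exploiting the $\alpha\ell$-component of $P$ over a run of consecutive ``regenerative'' steps after $L_n$, forcing a down-then-up path across both thresholds, with uniformity coming precisely from $S_{L_n}\in(\ol{\theta},\ol{\theta}+M)$ and $X_{L_n}\in(0,M]$, followed by Theorems 16.2.2 and 17.0.1 of \cite{mt}. The paper implements this via an iterated-random-function representation and explicit lower bounds involving convolutions of uniform densities, but the conceptual skeleton is the one you describe.
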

\begin{proof}	
	See in Section \ref{sec:proofs}.	
\end{proof}

A ``mirror image'' of the proof of Theorem \ref{cor:LLN} establishes the following
result, the ``symmetric pair'' of Theorem \ref{cor:LLN}.

\begin{theorem}\label{cor:LLN1} Let $\tilde{X}_{t}$, $t\in\mathbb{N}$ be a Markov chain on the state space
$[-M,\infty)$ for some $M>0$ and define $\tilde{S}_{n}:=S_{0}+\sum_{k=1}^{n}\tilde{X}_{k}$. 
Let $E\tilde{X}_{n}^{2}<\infty$ hold for all $n\geq 1$. Let Assumptions \ref{as:jmpDoeblin} and \ref{as:zeroMean} hold for $\tilde{X}_{t}$
Then the recursively defined quantities $\tilde{T}_{0}:=0$ and
\begin{equation}
\tilde{L}_{n+1}:=\min\{k>\tilde{T}_{n}:\ \tilde{S}_{k}>\ol{\theta}\},\quad
\tilde{T}_{n+1}:=\min\{k>\tilde{L}_{n+1}:\ \tilde{S}_{k}<\ul{\theta}\}
\end{equation}
are well-defined and finite. Furthermore,
there exists a probability $\upsilon$ on $\B(\tilde{\U})$ such that $\law{\tilde{U}_n}\to\upsilon$ at a geometric speed 
in total variation as $n\to\infty$, where
\begin{equation*}
\tilde{U}_{n}:=(\tilde{X}_{\tilde{L}_{n}},\tilde{S}_{\tilde{L}_{n}},\tilde{X}_{\tilde{T}_{n}},\tilde{S}_{\tilde{T}_{n}},\tilde{T}_n-
\tilde{L}_n),\ n\geq 2
\end{equation*}
is a homogeneous Markov chain on the state space
\begin{equation*}
\tilde{\mathcal{U}}:=(0,\infty)\times (\ol{\theta},\infty)\times [-M,0)\times (\ul{\theta}-M,\ul{\theta})\times 
(\N\setminus \{0\}).
\end{equation*}
For any bounded and measurable function $\phi:\tilde{\mathcal{U}}\to\R$,
	\begin{equation}\label{lili}
		\frac{\sum_{j=1}^{n}\phi(\tilde{U}_{j})}{n}\to \int_{\tilde{\mathcal{U}}}\phi(u)\,\upsilon(\dint u),\ n\to\infty,
	\end{equation}	
almost surely.\hfill $\Box$
\end{theorem}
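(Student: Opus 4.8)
The plan is to reduce Theorem \ref{cor:LLN1} to Theorem \ref{cor:LLN} by a reflection. Set $X_k := -\tilde{X}_k$ for $k\in\N$; since $\tilde{X}_k$ takes values in $[-M,\infty)$, the process $(X_k)_{k\in\N}$ is a time-homogeneous Markov chain on $\X=(-\infty,M]$. Because the normalized Lebesgue measure $\ell$ on $[-h,h]$ is invariant under $x\mapsto -x$, Assumption \ref{as:jmpDoeblin} transfers verbatim: writing $-A:=\{-a:a\in A\}$ and denoting by $P$, $\tilde{P}$ the respective kernels, $P(x,A)=\tilde{P}(-x,-A)\ge\alpha\ell(-A)=\alpha\ell(A)$. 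Likewise Assumption \ref{as:zeroMean} and the moment bound $EX_n^2=E\tilde{X}_n^2<\infty$ hold for $(X_k)$. Finally put $\hat{S}_0:=-S_0$ and $\hat{S}_n:=\hat{S}_0+X_1+\cdots+X_n=-\tilde{S}_n$; since $S_0$ is independent of $\sigma(\tilde{X}_k:k\in\N)=\sigma(X_k:k\in\N)$, the pair $((X_k)_{k\in\N},\hat{S}_0)$ satisfies all the standing hypotheses of Section \ref{sec:stability}.

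Next I would introduce the reflected thresholds $\ul{\theta}':=-\ol{\theta}$ and $\ol{\theta}':=-\ul{\theta}$, which still satisfy $\ul{\theta}'<0<\ol{\theta}'$, and run the recursion \eqref{eq:defLT} for $(\hat{S}_n)$ with these thresholds, obtaining crossing times $T_n,L_n$. Unwinding the definitions via the equivalences $\hat{S}_k<\ul{\theta}'\iff\tilde{S}_k>\ol{\theta}$ and $\hat{S}_k>\ol{\theta}'\iff\tilde{S}_k<\ul{\theta}$, an induction on $n$ shows that $T_n=\tilde{L}_n$ and $L_n=\tilde{T}_n$ for every $n$ (the base case being $L_0=0=\tilde{T}_0$). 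In particular, Lemma \ref{lem:TLfinite} applied to $(\hat{S}_n)$ guarantees that $T_n,L_n$, hence $\tilde{L}_n,\tilde{T}_n$, are well-defined and almost surely finite, which is the first assertion of the theorem.

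Then I would transport the conclusions of Theorem \ref{cor:LLN} through the reflection. With the identifications above, the chain $U_n=(X_{T_n},\hat{S}_{T_n},X_{L_n},\hat{S}_{L_n},L_n-T_n)$, $n\ge 2$, of Theorem \ref{cor:LLN} satisfies $\tilde{U}_n=\Phi(U_n)$, where $\Phi(a,b,c,d,e):=(-a,-b,-c,-d,e)$ is an involution; moreover $\Phi$ is a bi-measurable bijection of the state space $\U$ attached to the thresholds $\ul{\theta}',\ol{\theta}'$ onto $\tilde{\mathcal{U}}$, as one checks by transforming the five coordinate intervals. Theorem \ref{cor:LLN} provides a probability $\upsilon_0$ on $\B(\U)$ with $\law{U_n}\to\upsilon_0$ geometrically in $\dtv$; setting $\upsilon:=\Phi_{*}\upsilon_0$ and using that total variation distance is preserved by the bijective push-forward $\Phi$, we get $\law{\tilde{U}_n}=\Phi_{*}\law{U_n}\to\upsilon$ at the same geometric rate. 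For the ergodic averages, given bounded measurable $\phi:\tilde{\mathcal{U}}\to\R$, the map $\phi\circ\Phi$ is bounded and measurable on $\U$, so the law of large numbers in Theorem \ref{cor:LLN} applied to $\phi\circ\Phi$ yields $n^{-1}\sum_{j=1}^n\phi(\tilde{U}_j)=n^{-1}\sum_{j=1}^n(\phi\circ\Phi)(U_j)\to\int_{\U}(\phi\circ\Phi)\,\dint\upsilon_0=\int_{\tilde{\mathcal{U}}}\phi\,\dint\upsilon$ almost surely.

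No step here poses a real analytic obstacle, since everything is inherited from Theorem \ref{cor:LLN}; the only thing to watch is the bookkeeping. Specifically, one must verify carefully that the reflection interchanges the up- and down-crossing recursions, so that $\tilde{L}_n$ aligns with $T_n$ (not with $L_n$) and the index in the fifth coordinate $\tilde{T}_n-\tilde{L}_n$ matches $L_n-T_n$, and that the endpoints of the coordinate intervals of $\U$ map to those of $\tilde{\mathcal{U}}$ under $\Phi$. Once the alignment $T_n=\tilde{L}_n$, $L_n=\tilde{T}_n$ is pinned down, the remaining claims follow by pushing forward the distributional convergence and the law of large numbers through the bijection $\Phi$.
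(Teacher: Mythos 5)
Your reduction by the reflection $X_k:=-\tilde{X}_k$, $\hat{S}_n:=-\tilde{S}_n$, with swapped thresholds $\ul{\theta}'=-\ol{\theta}$, $\ol{\theta}'=-\ul{\theta}$, is correct, and the bookkeeping (the interchange $T_n=\tilde{L}_n$, $L_n=\tilde{T}_n$ and the identification $\Phi(\U')=\tilde{\mathcal{U}}$) checks out. This is exactly the ``mirror image'' argument the paper invokes without writing out; your version simply makes that symmetry precise as a push-forward of Theorem \ref{cor:LLN}.
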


\section{Trading with threshold strategies}\label{sec:trading}

Let the observed price of an asset be denoted by $A_{t}$ at time $t\in\N$. 
We may think of the price of a futures contract, for instance. Positive
prices can also be handled, see Remark \ref{posprice} below. 
We assume a simple dynamics:
\begin{equation}\label{eq:logprice}
	A_t=\mu t +S_t,
\end{equation} 
where $\mu\in\R$, $S_{0}\in\mathbb{R}$ are constants and $S_{t}:=S_{0}+\sum_{j=1}^{t}X_{j}$ for a Markov process $X$ 
with values in $[-M,M]$ for some $M>0$ and satisfying Assumptions \ref{as:jmpDoeblin} and \ref{as:zeroMean}. 

In this price model, $\mu t$ represents the drift (or ``trend'') and $S_t$ 
performs fluctuations around the trend (the martingale part). 
The minorization condition \eqref{minocond} is easy to interpret: whatever the current increment 
$x\in [-M,M]$ of the fluctuations 
$S$ is, with a positive probability the next increment will be small (that is, the price change
will be close to
$0$) and the movements of $S$ are diffuse, more precisely, they have a 
diffuse component. 

The practical situation we have in mind is an algorithm that tries to ``buy low and sell high''
an asset at high (but not ultra-high) frequencies, revising the portfolio, say, once every second or every minute. 
Such an algorithm is run continuously during the
trading day which can be considered a ``stationary'' environment as economic fundamentals do
not change significantly on such timescales. 

It seems that ergodic stochastic control is the right settings for such investment
problems: the algorithms does the same thing ``forever'' and its average perfomance should
be optimized. We remark that $\mu$ in such a setting is negligible and can safely be assumed $0$, as
often done in papers on high-frequency trading. Our results work nevertheless for arbitrary $\mu$ which
is of interest for trading on different timescales (e.g.\ daily revision of a portfolio for several 
months).

Now we set up the elements of our trading mechanism. Let the thresholds $\ul{\theta},\ol{\theta}\in\R$ be 
fixed, satisfying $\ul{\theta}<0<\ol{\theta}$. We interpret $\ul{\theta}$ as a level for $S_{t}$ under 
which it is advisable to buy the asset. Analogously, it is recommended to \emph{sell} the asset if $S_{t}$ 
exceeds $\ol{\theta}$. Thus, to realize a ``buying low, selling high''-type strategy, 
the asset should be bought at the times $T_{n}$, $n\geq 1$ and sold at the times $L_{n}$,
$n\geq 1$, realizing the profit 
\begin{equation}\label{eq:profit}
A_{L_{n}}-A_{T_{n}}=S_{L_n}-S_{T_n}+\mu(L_n-T_n).
\end{equation}

We now explain the significance of Theorem \ref{cor:LLN} in studying optimal trading with threshold 
strategies. An investor aims to maximize in $\underline{\theta},\overline{\theta}$
the long-term average utility from wealth, that is,
\begin{equation}\label{eq:LTutil}
	\limsup_{n\to\infty}\frac{\sum_{k=1}^n 
		\left[u(S_{L_k(\theta)}-S_{T_k(\theta)}+\mu(L_k(\theta)-T_k(\theta)))-p(L_{k}(\theta)-T_{k}(\theta))\right]}{n}, 
\end{equation}
where $u:\R\to\R$ is a utility function and $L_{k}(\theta),T_{k}(\theta)$ refer to
the respective stopping times defined in terms of the parameter
$\theta=(\underline{\theta},\overline{\theta})$. The function $p:\mathbb{R}_{+}\to \mathbb{R}_{+}$
serves to penalize long waiting times. 

\begin{remark}{\rm If the price is modelled by processes with continuous trajectories, as in
\cite{cartea,dai,zervos,zhang} then the thresholds are hit precisely and the profit realized
between $T_{n}$ and $L_{n}$ is exactly $\overline{\theta}-\underline{\theta}$. In the present
setting (just like in the case of continuous-time processes with jumps), the profit
realized may be significantly different due to the overshoot (resp.\ undershoot) of the level $\overline{\theta}$
(resp.\ $\underline{\theta}$). From the point of view of ergodic control, it is crucial to establish
that these overshoots/undershoots tend to a limiting law, which is the central concern of our present paper.
}
\end{remark}

According to Theorem \ref{cor:LLN}, the limsup in the above expression is, in fact, a limit, for a large
class of $u$, $p$. One could easily incorporate various types of transaction costs in the model.

\begin{example}{\rm 
Let $u,p$ be non-decreasing functions that are bounded from above (e.g.\ $u$ can be the exponential utility, expressing
high risk-aversion),
and assume $\mu\geq 0$. Since $S_{L_{k}}-S_{T_{k}}$ is necessarily bounded \emph{from below}, \eqref{lili} 
holds with the choice $\phi(U_{n}):=
u(S_{L_k}-S_{T_k}+\mu(L_k-T_k))-p(L_{k}-T_{k})$ and the limsup is a limit in \eqref{eq:LTutil} above.}
\end{example}

\begin{remark}\label{symfm}{\rm In the alternative setting of Theorem \ref{cor:LLN1} above (with $\tilde{X}_{n}=X_{n}$),
the trader sells one unit of the financial asset at $\tilde{L}_{n}$ (shortselling) and then closes the position
at $\tilde{T}_{n}$
thus realizing a profit 
\begin{equation}\label{eq:profit2}
-(A_{\tilde{T}_{n}}-A_{\tilde{L}_{n}})=S_{\tilde{L}_n}-S_{\tilde{T}_n}-\mu(\tilde{L}_n-\tilde{T}_n).
\end{equation}
This is the analogue (with short positions) of the long-position strategy realizing \eqref{eq:profit}.
Theorem \ref{cor:LLN1} implies that 
\begin{equation}\label{eq:LTutil}
	\limsup_{n\to\infty}\frac{\sum_{k=1}^n 
		\left[u(S_{\tilde{L}_k(\theta)}-S_{\tilde{T}_k(\theta)}+\mu(\tilde{L}_k(\theta)-\tilde{T}_k(\theta)))-{}
		p(\tilde{L}_{k}(\theta)-\tilde{T}_{k}(\theta))\right]}{n} 
\end{equation}
is a limit in this case, too.
}
\end{remark}

In future work, we intend to optimize $\ul{\theta},\ol{\theta}$ by means of adaptive control, using recursive schemes 
such as the Kiefer-Wolfowitz 
algorithm, see \cite{zh} and Section 6 of \cite{kinga}. To prove the convergence of such procedures, it is a prerequisite 
that the process ${S_{L_k}-S_{T_k}+\mu(L_k-T_k)}$, $k\in\mathbb{N}$ has favorable ergodic properties. 
This is precisely the content of 
Theorem \ref{cor:LLN} above. 

\begin{remark}\label{posprice}{\rm In an alternative setting, $A_{t}$ may model the \emph{logprice} of an asset. In that case 
investing one dollar between $T_{n}$ and $L_{n}$ yields $\exp\left(S_{L_{n}}-S_{T_{n}}+\mu(L_{n}-T_{n})\right)$ dollars. 
Let $u:(0,\infty)\to \R$, $p:\mathbb{R}_{+}\to\mathbb{R}_{+}$ be non-decreasing functions, $p$ bounded and
$u$ bounded from above 
(such as a negative power utility function). 
In this setting the optimization
\begin{equation*}
\max_{\underline{\theta},\overline{\theta}}\limsup_{n\to\infty}\frac{\sum_{k=1}^n 
\left[u(\exp\left(S_{L_k(\theta)}-S_{T_k(\theta)}+\mu(L_k(\theta)-T_k(\theta))\right))-p(L_{k}(\theta)-T_{k}(\theta))\right]}{n}
\end{equation*}
corresponds to maximizing the utility of the long-term investment of one dollar (minus an impatience
penalty), using threshold
strategies controlled by $\theta$. When $\mu\geq 0$, the limsup is a limit, again by Theorem \ref{cor:LLN}.
}
\end{remark}

We briefly compare our approach to existing ones. We do not survey the large literature on switching
problems, see Chapter 5 of \cite{pham}, only some of the directly related papers.  
Formulations as optimal stopping problems with discounting appear e.g.\ in \cite{shiryaev,dai,zhang}.
Sequential buying and selling decisions are considered for mean-reverting assets in \cite{zz} and \cite{song}. 
Mean-reversion trading is also analysed in \cite{leung}. \cite{zervos}
treats a general diffusion setting, again
using discounting. 

In our setting of intraday
trading discounting is not an appealing option: on such timescales the decrease of the value of future money is not
manifested. Here the ergodic control of averages seems more natural an objective
to us. Recall also \cite{cartea} exploring high-frequency perspectives maximizing expectation 
on a finite horizon (without discounting).

All the above mentioned papers are about diffusion models where the phenomenon  of ``overshooting'' and
``undershooting'' does not appear. They are, on the contrary, the main focus of the present work. 
Similar problems seem to come up in an ergodic control setting for
continuous-time price processes with jumps. We are unaware of any related studies.

\section{Stability of overshoots}\label{sec:overshoots}

In \cite{mijatovic-vysotsky} the authors consider a zero-mean i.i.d.\ sequence $X_{n}$, $n\geq 1$
and a random variable $S_{0}$ independent of the $X_{t}$.
They determine the (stationary) limiting law $\mu_{*}$ for the Markov process of \emph{overshoots} defined 
by $O_{0}:=\max (S_{0},0)$,
$$
O_{n}=S_{L_{n}},\ n\geq 1 
$$
where $L_{n}, T_{n}$ are defined as in \eqref{eq:defLT} but with the choice 
$\underline{\theta}=\overline{\theta}=0$\footnote{Strictly speaking,
in the definition of $L_{n+1}$, see \eqref{eq:defLT}, they have $\geq$ instead of $>$.}. 
They also establish the convergence of
$\mathrm{Law}(O_{n})$ to $\mu_{*}$ under suitable conditions.
Generalizations to \emph{entrance Markov chains} on more general state spaces have been obtained
in \cite{mijatovic2020stationary}.

Using methods of the present paper, we may obtain generalizations into another direction: we
may relax the independence assumption on the $X_{t}$. 

\begin{theorem}\label{thm:ovi} 
	Under Assumption \ref{as:jmpDoeblin} and \ref{as:zeroMean}, there exists
	a probability $\upsilon$ on $\B((0,\infty))$ such that $\law{O_n}\to\upsilon$ at a geometric 
	speed in total variation as $n\to\infty$.
	Furthermore, for any bounded and measurable function $\phi:(0,\infty)\to\R$,
	\begin{equation}
		\frac{\sum_{j=1}^{n}\phi(O_{j})}{n}\to \int_{(0,\infty)}\phi(u)\,\upsilon(\dint u),\ n\to\infty,
	\end{equation}	
almost surely.
\end{theorem}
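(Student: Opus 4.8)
The plan is to realize $(O_n)$ as a coordinate projection of a Markov chain on a \emph{bounded} state space and to transport the ergodic conclusions through the projection. For $n\ge 1$ set $V_n:=(X_{L_n},S_{L_n})$, with $L_n,T_n$ as in \eqref{eq:defLT} now read with $\ul\theta=\ol\theta=0$. Since $S_{L_n-1}\le 0<S_{L_n}$ and $X_{L_n}\le M$, one has $V_n\in\mathcal{V}:=(0,M]\times(0,M]$, and $O_n=\pi_2(V_n)$ with $\pi_2$ the second-coordinate projection. Moreover $V_n$ is a time-homogeneous Markov chain: by the strong Markov property of $(X_n)$ at the stopping time $L_n$, the law of the excursion of $S$ after $L_n$, hence of $(X_{L_{n+1}},S_{L_{n+1}})$, depends on the past only through $V_n$ — in fact $V_n$ is the natural reduction of the chain $U_n$ of Theorem~\ref{cor:LLN} read with $\ul\theta=\ol\theta=0$. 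Thus it suffices to show $\law{V_n}\to\upsilon'$ in total variation at geometric speed for some invariant probability $\upsilon'$ on $\B(\mathcal{V})$, together with the law of large numbers for bounded measurable functionals of $V_n$. Then $\upsilon:=(\pi_2)_{*}\upsilon'$ is a probability on $\B((0,\infty))$ (supported in $(0,M]$), total-variation convergence and its rate are preserved by push-forward, and applying the law of large numbers to $\phi\circ\pi_2$ gives the stated one for $O_n$ (the value $O_0$ being irrelevant for the asymptotics).

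For the ergodic properties of $V_n$ one essentially repeats the proof of Theorem~\ref{cor:LLN}, of which the present case is a simpler instance. A.s.\ finiteness of $T_n,L_n$ is Lemma~\ref{lem:TLfinite}; its proof only uses that $S$ oscillates between $-\infty$ and $+\infty$ a.s.\ (Proposition~VII-3-9 of \cite{neveu}), which does not refer to the thresholds. Since $\mathcal{V}$ is bounded, no drift condition is needed: it is enough to produce a single minorization $P_V(v,\cdot)\ge\beta\,\nu(\cdot)$ for \emph{all} $v\in\mathcal{V}$, where $P_V$ is the transition kernel of $V$, $\beta>0$ and $\nu$ a probability on $\mathcal{V}$. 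By Theorem~16.2.2 of \cite{mt} this gives uniform ergodicity — hence geometric convergence in total variation, exactly as in Lemma~\ref{lem:XlimD} — and makes $V$ positive Harris recurrent, so the ergodic theorem yields the law of large numbers.

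It remains to establish the minorization, which is the only genuinely new ingredient and the main obstacle. Starting the excursion from $V_n=(x_0,s_0)\in\mathcal{V}$, split each increment, via Assumption~\ref{as:jmpDoeblin}, into an $\alpha$-fraction of a uniform-$[-h,h]$ law — independent of the past on an enlarged space — plus a $(1-\alpha)$-remainder. Because $s_0\le M$ and the increments are bounded \emph{above} by $M$, some $j_1\le\lfrf{M/h}+O(1)$ of ``good'' steps (probability $\ge\alpha^{j_1}$) suffice to drive $S$ below $0$ for the first time with the down-crossing value admitting a density bounded below on a fixed subinterval of $(-h,0)$; symmetrically, a further bounded number of good steps brings $S$ back above $0$ with $S_{L_{n+1}}$ having a density bounded below on a fixed subinterval of $(0,h)$ and $X_{L_{n+1}}$ ranging over a fixed subinterval of $(0,h]$ — all with probability bounded below \emph{uniformly} in $(x_0,s_0)$, which is precisely $P_V(v,\cdot)\ge\beta\,\nu(\cdot)$. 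The delicate point is the density at the crossing step: there the effective increment is the uniform-$[-h,h]$ part \emph{conditioned} to land on the correct side of $0$, whose density lower bound deteriorates when the pre-crossing value sits near the edge of the reachable window; the fix is to first steer $S$ into a narrow band around $0$ and to keep the pre-crossing value itself spread out, so that the conditioned law still dominates a fixed measure. Since none of this uses the separation $\ul\theta<0<\ol\theta$, it is the estimate carried out for Theorem~\ref{cor:LLN} in Section~\ref{sec:proofs}, and can be invoked rather than redone.
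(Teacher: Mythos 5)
Your proposal is essentially the paper's own proof: the paper likewise studies the chain $Z_n=(X_{L_n},S_{L_n})$ (on the slightly smaller state space $\{(x,s)\in(0,M]^2\mid x\ge s\}$, since $S_{L_n}=S_{L_n-1}+X_{L_n}\le X_{L_n}$), establishes a uniform Doeblin minorization by using the regenerative uniform-$[-h,h]$ component of Assumption \ref{as:jmpDoeblin} along trajectories that descend for a bounded number of steps and then cross $0$ upward in one step, and concludes via uniform ergodicity and the LLN from \cite{mt}. One small correction: the minorization estimate from the proof of Theorem \ref{cor:LLN} cannot literally be ``invoked'' here, since it relies on a constant $\tilde{\ga}<(\ol{\theta}-\ul{\theta})/h$ which degenerates when $\ul{\theta}=\ol{\theta}=0$; the computation has to be redone for the coincident threshold (as the paper does), but it is simpler than the two-threshold case and your sketch of it is correct.
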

\begin{proof}	
	See in Section \ref{sec:proofs}.	
\end{proof}

\begin{remark}{\rm In the i.i.d.\ case Theorem \ref{thm:ovi} applies if $X_{t}$ is square-integrable, bounded from above
and the law of $X_{t}$ dominates constant times the Lebesgue measure in a neighbourhood of $0$.
In \cite{mijatovic-vysotsky} a much larger class of i.i.d.\ random variables is treated. On the other hand,
we can handle Markovian summands unlike \cite{mijatovic-vysotsky}.}
\end{remark}

\section{Proofs}\label{sec:proofs}

\begin{proof}[Proof of Theorem \ref{cor:LLN}]
	
Iterated random function representation of Markov chains on standard Borel spaces is a commonly used construction, see e.g. 
\cite{bwbook,bm}. A similar representation for $(X_n)_{n\in\N}$ is shown in Lemma \ref{lem:ifs} below which will play a crucial role in the proof. Although the proof is quite standard,
we present it for the reader's convenience.
\begin{lemma}\label{lem:ifs}  
	Let $(\xi_n)_{n\in\N}$ and $(\eta_n)_{n\in\N}$ be i.i.d. sequences, independent of each other, and
	also independent of $\sigma (X_0, S_0)$, moreover let $\xi_0,\eta_0$ be uniform on $[0,1]$. Then there exists a
	map $\Phi:\X\times [0,1]\times [0,1]\to\X$ such that for all $x\in\X$, and $u\in [0,1]$,
	we have
	\begin{equation}\label{eq:prop}
	\forall v\in [0,\alpha) \quad	\Phi (x, u, v) = h (2u-1),
	\end{equation}
	where $h,\alpha>0$ are as in Assumption \ref{as:jmpDoeblin}.
	Furthermore, the process $(X_n')_{n\in\N}$ given by the recursion $X_0'= X_0$,
	$X_{n+1}' = \Phi (X_n', \xi_{n+1}, \eta_{n+1})$, $n\in\N$
	is a version of $(X_n)_{n\in\N}$.
\end{lemma}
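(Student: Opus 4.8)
The plan is to construct $\Phi$ by a standard splitting of the transition kernel into its minorization part and a residual, and then to verify the asserted version property via a routine induction. First I would fix, for each $x\in\X$, the sub-probability measure $\nu_x(A):=P(x,A)-\alpha\ell(A)$, which is a genuine (nonnegative) measure of total mass $1-\alpha$ by Assumption~\ref{as:jmpDoeblin}; write $\bar\nu_x:=\nu_x/(1-\alpha)$ for its normalization (when $\alpha<1$; the case $\alpha=1$ is trivial since then $P(x,\cdot)=\ell$ for all $x$). The decomposition $P(x,\cdot)=\alpha\,\ell+(1-\alpha)\,\bar\nu_x$ is the usual Doeblin/Nummelin split. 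The idea is that the coordinate $v$ decides which component we draw from: on $\{v<\alpha\}$ we sample from $\ell$, on $\{v\ge\alpha\}$ we sample from $\bar\nu_x$; the coordinate $u$ then supplies the randomness for the chosen component.

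Next I would make this measurable. For the $\ell$-part, since $\ell$ is the uniform law on $[-h,h]$, its inverse distribution function is exactly $u\mapsto h(2u-1)$, so setting $\Phi(x,u,v):=h(2u-1)$ for $v\in[0,\alpha)$ gives \eqref{eq:prop} on the nose and is independent of $x$, as required. For the $\bar\nu_x$-part I would invoke a measurable disintegration: because $\X$ is a Borel subset of $\R$ and $x\mapsto P(x,\cdot)$ is a transition kernel, $x\mapsto\bar\nu_x$ is a Borel family of probability measures on $\R$, hence so is the family of (generalized) inverse distribution functions $G_x(u):=\inf\{t:\bar\nu_x((-\infty,t])\ge u\}$, and $(x,u)\mapsto G_x(u)$ is jointly measurable. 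One checks $G_x$ is well-defined into $\X=(-\infty,M]$ because $\bar\nu_x$ is supported on $\X$. Define $\Phi(x,u,v):=G_x(u)$ for $v\in[\alpha,1]$. Joint measurability of $\Phi$ on $\X\times[0,1]\times[0,1]$ follows by gluing the two pieces along the measurable set $\{v<\alpha\}$.

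Then I would verify the one-step law. With $\xi_{n+1}$ uniform on $[0,1]$, independent of $\eta_{n+1}$ uniform on $[0,1]$, and both independent of $X_n'$, the conditional law of $\Phi(X_n',\xi_{n+1},\eta_{n+1})$ given $X_n'=x$ is
\begin{equation*}
\P(\eta_{n+1}<\alpha)\,\law{h(2\xi_{n+1}-1)}+\P(\eta_{n+1}\ge\alpha)\,\law{G_x(\xi_{n+1})}=\alpha\,\ell+(1-\alpha)\,\bar\nu_x=P(x,\cdot),
\end{equation*}
using that the pushforward of the uniform law under a generalized inverse c.d.f.\ recovers the original law. This is the key identity. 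An easy induction on $n$, using independence of the $(\xi_k,\eta_k)$ across time and from $(X_0,S_0)$, then shows that $(X_n')_{n\in\N}$ with $X_0'=X_0$ is a Markov chain with transition kernel $P$ and the same initial law as $(X_n)_{n\in\N}$, hence a version of it; note the construction keeps $(X_n')$ adapted to $\sigma(X_0,S_0,\xi_k,\eta_k:k\le n)$, which is what later arguments need.

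The only genuinely delicate point is the measurable selection producing $(x,u)\mapsto G_x(u)$ jointly measurable; this is standard (one can cite the measurability of $x\mapsto\bar\nu_x((-\infty,t])$ for each $t$, monotone-class arguments, and the fact that a right-continuous monotone family has jointly measurable generalized inverse, as in the usual randomization lemmas for transition kernels on Borel spaces, e.g.\ \cite{bm}), so I would state it and give a one-line justification rather than belabor it. Everything else is bookkeeping.
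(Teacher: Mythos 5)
Your construction is exactly the paper's: the Doeblin split $P(x,\cdot)=\alpha\ell+(1-\alpha)q(x,\cdot)$, with $v$ selecting the component, $h(2u-1)$ as the inverse c.d.f.\ of $\ell$, and the generalized inverse c.d.f.\ of the residual kernel for $v\ge\alpha$, followed by the same one-step verification that the conditional law equals $P(x,\cdot)$. The proposal is correct and matches the paper's proof in every essential respect (you are slightly more explicit about joint measurability and the degenerate case $\alpha=1$, which the paper glosses over).
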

\begin{proof}
	For $x\in\X$ and $A\in \B (\X)$, let us consider the decomposition
	\begin{equation*}
	P(x,A) = \alpha \ell(A) + (1-\alpha) q(x,A),
	\end{equation*}
	where by Assumption \ref{as:jmpDoeblin},
	\begin{equation*}
	q(x,A)=\frac{P (x,A)-\alpha\ell(A)}{1-\alpha}
	\end{equation*}
	is a probability kernel. For $x\in\X$ and $u,v\in [0,1]$, we define 
	\begin{equation*}
	\Phi (x,u,v) = \ind_{\{v<\alpha\}}\, h (2u-1) + \ind_{\{v\ge\alpha\}}\,q^{-1}(x,u),
	\end{equation*}
	where 
	$q^{-1}(x,u):=\inf\{r\in\Q \mid q(x, (-\infty,r])\ge u\}$, $u\in [0,1]$
	is the pseudoinverse of the cumulative distribution function $r\mapsto q(x,(-\infty,r))$, $x\in\X=(-\infty,M]$.
	
	Obviously, \eqref{eq:prop} holds true, and thus for any fixed $u\in [0,1]$, the random map $x\mapsto\Phi (x, u, \eta_{n+1})$ is constant on $\X$ with probability $\alpha$ showing that $X_n'$ forgets its previous state with positive probability. This observation will play a central role later.
	
	On the other hand, by the definition of the pseudoinverse, $\law{q^{-1}(x,\xi_0)}=q(x,\cdot)$,
	and thus we can write
	\begin{align*}
	\P (\Phi (x,\xi_0,\eta_0)\in A) &=
	\P (\Phi (x,\xi_0,\eta_0)\in A;\eta_0<\alpha)
	+
	\P (\Phi (x,\xi_0,\eta_0)\in A;\eta_0\ge\alpha)
	\\
	&=\alpha\P (h(2\xi_0-1)\in A) + (1-\alpha)\P (q^{-1}(x,\xi_0)\in A) \\
	&= \alpha \ell(A) + (1-\alpha) q(x,A) = P (x,A).
	\end{align*}
	
	To sum up, the chains $(X_n)_{n\in\N}$ and $(X_n')_{n\in\N}$ have the same transition kernel, and their initial states also coincide showing that these processes are versions of each other.
\end{proof}

Since we are interested in the distribution of $U_n$, from now on we may and will assume that the
the random walk $(S_n)_{n\in\N}$ is driven by $(X_n')_{n\in\N}$, whereby for every $n\in\N$, each of 
$X_{T_n}$, $S_{T_n}$, $X_{L_n}$, $S_{L_n}$, $L_n$ and $T_n$ is a function of $X_0, S_0$, $(\xi_n)_{n\in\N}$ and $(\eta_n)_{n\in\N}$.

\medskip
In what follows, we are going to prove that the minorization property of $(X_n)_{n\ge 1}$ is 
inherited by $(U_n)_{n\ge 1}$. Let us denote the transition kernel of 
$U$ by $Q: \U\times\B (\U)\to [0,1]$, that is
for all $y\in \U$ and $B\in\B (\U)$
\begin{equation*}
 \P (U_{n+1}\in B\mid U_n=y) = Q(y,B),\quad n>1	
\end{equation*}
holds.
We aim to show that there exist a non-zero Borel measure $\tilde{\ka}:\B (\U)\to [0,\infty)$ such that for all $y\in\U$ and $B\in\B (\U)$,
\begin{equation}\label{eq:UDoeblin}
Q(y,B )\ge \tilde{\ka} (B).
\end{equation}

For $n\in\N_+$, we define $\tau_n = \sup \{t\in\N\mid \eta_{L_n+k}<\alpha,\,k=1,\ldots,t \}$. Clearly,
$\tau_n$ is independent of $U_n$, moreover it follows a $\mathrm{Geo} (1-\alpha)$ distribution counting the number of
failures until the first success i.e. $\P (\tau_n=j)=\alpha^j (1-\alpha)$, $j\in\N$.

Now, let $y=(\ul{x},\ul{s},\ol{x},\ol{s},r)\in\U$ and $B\in\B (\U)$ be arbitrary and fixed. By the tower rule, we have
\begin{align}\label{eq:decomp}
\begin{split}
Q(y,B) &= \P \left(U_{n+1}\in B\mid U_n =y\right)\\ 
&= \sum_{k=0}^{\infty} \P \left(U_{n+1}\in B\mid U_n =y,\,\tau_n=k\right) \P (\tau_n=k\mid U_n =y) \\
&=\sum_{k=0}^{\infty} \P \left(U_{n+1}\in B\mid U_n =y,\,\tau_n=k\right) \alpha^k (1-\alpha)
\\
&\ge
\sum_{k=2}^{\infty} \P \left(U_{n+1}\in B,\,L_{n+1}\le L_n+k\mid S_{L_n} =\ol{s},\,\tau_n=k\right) \alpha^k (1-\alpha),
\end{split}
\end{align}
where we used that the sigma algebras $\sigma (\xi_{L_n+k},\,\eta_{L_n+k},\,k\ge 1)$ and $\sigma (U_n)$ are independent, moreover on sets $\{S_{L_{n}}=\bar{s}, \tau_{n}=k\}$, we have
\begin{equation}
	(X_{L_n+j},S_{L_n+j})= \left(
	h(2\xi_{L_n+j}-1),
	\bar{s}+\sum_{i=1}^{j} h(2\xi_{L_n+j}-1)
	\right),\,\,1\le j\le k
\end{equation}
implying that $U_{n+1}$ and $(X_{T_{n}},S_{T_{n}},X_{L_{n}},L_n-T_n)$ are 
conditionally independent given $\{S_{L_{n}}=\bar{s}, \tau_{n}=k\}$ whenever $k\ge 1$.

Furthermore, we can write
\begin{align}\label{eq:decomp2}
\begin{split}
&\P \left(U_{n+1}\in B,\,L_{n+1}\le L_n+k \mid S_{L_n} =\ol{s},\,\tau_n=k\right)= \\
&=\sum_{l=2}^{k}\P \left(U_{n+1}\in B,\,L_{n+1}= L_n+l\mid S_{L_n} =\ol{s},\,\tau_n=k\right) \\
&=\sum_{l=2}^{k}\sum_{j=1}^{l-1}\P \left(U_{n+1}\in B,\,T_{n+1}= L_n+j,\,L_{n+1}= L_n+l\mid S_{L_n} =\ol{s},\,\tau_n=k\right)
\end{split}
\end{align}

Let us introduce the auxiliary random walk $W_n = \ol{s}+h\sum_{i=1}^{n} (2\xi_i-1)$, and we introduce the associated quantities $L_{0}^W:=0$ and for $n\in\N$,
\begin{equation*}
T_{n+1}^W:=\min\{k>L_{n}^W:\ W_{k}<\ul{\theta}\},\quad
L_{n+1}^W:=\min\{k>T_{n+1}^W:\ W_{k}>\ol{\theta}\}.
\end{equation*}
Similarly, we define $U_n^W=\left(h(2\xi_{T_n^W}-1),W_{T_n^W},h(2\xi_{L_n^W}-1),W_{L_n^W}, L_n^W-T_n^W\right)$, $n\in\N$.

Obviously, for fixed $1\le j<l\le k$, we have
\begin{align}
\begin{split}
	\P &\left(U_{n+1}\in B,\,T_{n+1}= L_n+j,\,L_{n+1}= L_n+l\mid S_{L_n} =\ol{s},\,\tau_n=k\right) =\\
	&=\P (U_1^W\in B,\,T_1^W=j,\,L_1^W=l).
\end{split}
\end{align}

We estimate this probability from below by taking into account only trajectories 
that consist of just one decreasing and one increasing segment (see Figure \ref{fig:path} for
an illustration).
\begin{align}\label{eq:downup}
\begin{split}
& \P(U_1^W\in B,\,T_1^W=j,\,L_1^W=l)\\
&\ge
\frac{1}{2^{l}} 
\P \left(U_1^W\in B,\,T_1^W=j,\,L_1^W=l \middle|
\,\bigcap_{i=1}^{j} \{\xi_i<1/2\},\,\bigcap_{i=j+1}^{l} \{\xi_i\ge 1/2\}\right) \\
&=
\frac{1}{2^{l}} 
\P \left(U_1^W\in B,\,W_{j}<\ul{\theta}\le W_{j-1},\,W_{l-1}\le\ol{\theta}<W_l
\middle|
\,\bigcap_{i=1}^{j} \{\xi_i<1/2\},\,\bigcap_{i=j+1}^{l} \{\xi_i\ge 1/2\}\right)
\end{split}
\end{align}
\begin{figure}[h!]
	\centering
	\includegraphics[width=0.75\linewidth]{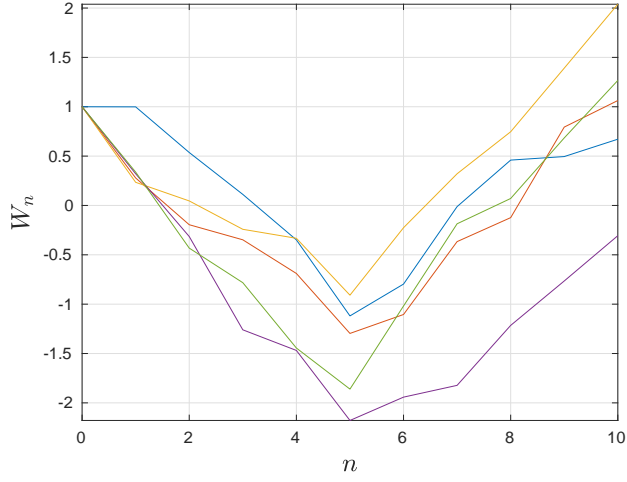}
	\caption{Trajectories of $(W_n)_{n\in\N}$ with one local minimum at $n=5$ ($\ol{s}=1$, $h=1$).}\label{fig:path}
\end{figure}

\noindent
Note that the conditional distribution of $(W_1,\ldots,W_l)$ given $\bigcap_{i=1}^{j} \{\xi_i<1/2\}$ and $\bigcap_{i=j+1}^{l} \{\xi_i\ge 1/2\}$ coincides with the distribution of $(W'_1,\ldots,W'_l)$, where
\begin{equation*}
W'_m:=\ol{s}-h\sum_{i=1}^{\min (j,m)} \xi_i +h\sum_{i=j+1}^{m} \xi_i,\,\, 1\le m\le l
\end{equation*}
with the convention that empty sums are defined to be zero.

Using this, and that $B\subset\U=(-\infty,0)\times (-\infty,\ul{\theta})\times (0,M]\times (\ol{\theta},\ol{\theta}+M)\times (\N\setminus \{0\})$, we can write
\begin{align*}
	\P& \left(U_1^W\in B,\,W_{j}<\ul{\theta}\le W_{j-1},\,W_{l-1}\le\ol{\theta}<W_l
	\middle|
	\,\bigcap_{i=1}^{j} \{\xi_i<1/2\},\,\bigcap_{i=j+1}^{l} \{\xi_i\ge 1/2\}\right) =\\
	&=
	\P\left((-h\xi_j,W'_j,h\xi_l,W'_l,l-j)\in B,\,\,\ul{\theta}\le W'_{j-1},\,W'_{l-1}\le\ol{\theta}\right) \\
	&=\int_0^1\int_0^1\int_0^\infty\int_0^\infty
	\ind_{(-hw,\ol{s}-hu-hw,hz,\ol{s}-hu-hw+hv+hz,l-j)\in B}
	\times\\
	&\times
	\ind_{\ul{\theta}\le\ol{s}-hu}\times
	\ind_{\ol{s}-hu-hw+hv\le\ol{\theta}}\times f_{j-1}(u)\times f_{l-1-j}(v)
	\,\dint u\dint v\dint w\dint z
\end{align*}
where for $m\in\N_+$, $f_m:[0,\infty)\to[0,\infty)$ stands for the probability density function of the sum
of $m$ independent random variables each having a uniform distribution on $[0,1]$.
Now, we can evaluate the quadruple integral using the substitution
$x_1=-hw$, $x_2=\ol{s}-hu-hw$, $x_3=hz$, $x_4=\ol{s}-hu-hw+hv+hz$,
and thus we have
\begin{align*}
	&\int_0^1\int_0^1\int_0^\infty\int_0^\infty
	\ind_{(-hw,\ol{s}-hu-hw,hz,\ol{s}-hu-hw+hv+hz,l-j)\in B}
	\times\\
	&\times
	\ind_{\ul{\theta}\le\ol{s}-hu}\times
	\ind_{\ol{s}-hu-hw+hv\le\ol{\theta}}\times f_{j-1}(u)f_{l-1-j}(v)
	\,\dint u\dint v\dint w\dint z =\\
	&=\frac{1}{h^4}
	\int_{\ol{\theta}}^{\ol{\theta}+M}
	\int_0^h
	\int_{-\infty}^{\ul{\theta}}
	\int_{-h}^0
	\ind_{(x_1,x_2,x_3,x_4,l-j)\in B}
	\times
	\ind_{\ul{\theta}\le x_2-x_1}
	\times
	\ind_{x_4-x_3\le\ol{\theta}}\times \\
	&\times 
	f_{j-1}\left(\frac{\ol{s}-(x_2-x_1)}{h}\right)
	\times
	f_{l-1-j}\left(\frac{x_4-x_3-x_2}{h}\right)
	\,\,\dint x_1\dint x_2\dint x_3\dint x_4 \\
	&=\int_{\mathcal{V}}\ind_{(\mathbf{x},l-j)\in B}\times g_{\ol{s},j,l-j}(\mathbf{x})\,\la(\dint\mathbf{x}),
\end{align*}
where $\mathbf{x}$ is a shorthand notation for $(x_1,x_2,x_3,x_4)$, $\la$ is the 
Lebesgue measure on $\R^4$, and $\mathcal{V}$ is used for 
$(-\infty,0)\times (-\infty,\ul{\theta})\times (0,M]\times (\ol{\theta},\ol{\theta}+M)$, moreover for $j,m>1$ 
\begin{align}\label{eq:g}
\begin{split}
g_{\ol{s},j,m}(\mathbf{x}) &=\frac{1}{h^4}
\ind_{\ul{\theta}\le x_2-x_1}
\times 
\ind_{x_4-x_3\le\ol{\theta}}
\times
\ind_{x_1\in [-h,0]}
\times 
\ind_{x_3\in [0,h]}\\
&\times
f_{j-1}\left(\frac{\ol{s}-(x_2-x_1)}{h}\right) \times f_{m-1}\left(\frac{x_4-x_3-x_2}{h}\right).
\end{split}
\end{align}
Substituting this back into \eqref{eq:decomp2} and reindexing by $m=l-j$ yields

\noindent 
\begin{adjustbox}{max width=\textwidth}
	\parbox{\linewidth}{%
\begin{align*}
	\P \left(U_{n+1}\in B,\,L_{n+1}\le L_n+k\mid S_{L_n}=\ol{s},\,\tau_n=k\right)
	&\ge 
	\sum_{l=2}^{k}
	\sum_{j=2}^{l-2}
	\frac{1}{2^l}
	\int_{\mathcal{V}}\ind_{(\mathbf{x},l-j)\in B}\times g_{\ol{s},j,l-j}(\mathbf{x})\,\la (\dint\mathbf{x})
	\\
	&=
	\int_{\mathcal{V}}
	\sum_{m=2}^{k-2}
	\sum_{j=2}^{k-m}
	\frac{1}{2^{j+m}}
	\ind_{(\mathbf{x},m)\in B}\times g_{\ol{s},j,m}(\mathbf{x})\,\la (\dint\mathbf{x})
\end{align*}
}
\end{adjustbox}

and thus by \eqref{eq:decomp}, we arrive at 
\begin{align*}
	Q(y,B) &\ge \int_{\mathcal{V}}
	\sum_{k=4}^{\infty}
	\alpha^k (1-\alpha)
	\sum_{m=2}^{k-2}
	\sum_{j=2}^{k-m}
	\frac{1}{2^{j+m}}
	\ind_{(\mathbf{x},m)\in B}\times g_{\ol{s},j,m}(\mathbf{x})\,\la (\dint\mathbf{x}) \\
	&= \sum_{m=1}^{\infty}\int_{\mathcal{V}} \ind_{(\mathbf{x},m)\in B}
	\times
	\ind_{m\ge 2}
	\times
	\sum_{k=m+2}^{\infty}
	\sum_{j=2}^{k-m}
	\frac{\alpha^k (1-\alpha)}{2^{j+m}}
	g_{\ol{s},j,m}(\mathbf{x})\,\la (\dint\mathbf{x}) \\
	&= \sum_{m=1}^{\infty}\int_{\mathcal{V}} \ind_{(\mathbf{x},m)\in B}
	\times
	\ind_{m\ge 2}
	\times
	\sum_{j=2}^{\infty}
	\sum_{k=j+m}^{\infty}
	\frac{\alpha^k (1-\alpha)}{2^{j+m}}
	g_{\ol{s},j,m}(\mathbf{x})\,\la (\dint\mathbf{x}) \\
	&= \int_{B} 
	\ind_{m\ge 2}
	\times
	\sum_{j=2}^{\infty}
	\frac{\alpha^{j+m}}{2^{j+m}}
	g_{\ol{s},j,m}(\mathbf{x})\,\la (\dint\mathbf{x})\otimes \delta (\dint m),
\end{align*}
where $\delta$ is the usual counting measure on $\N$.

Let us observe that on $\mathrm{Supp}(g)\subseteq\U$, $\ul{\theta}\le x_2-x_1\le \ul{\theta}+h$.
Now, we fix $0<\tilde{\ga}<\min \left((\ol{\theta}-\ul{\theta})/h,1\right)$ and consider
only $x_1$ and $x_2$ satisfying $\ul{\theta}\le x_2-x_1\le \ul{\theta}+\tilde{\ga}h$. Furthermore, since the jumps of $(S_n)_{n\in\N}$ are bounded from above by $M$, we have $\ol{\theta}<\ol{s}<\ol{\theta}+ M$ hence for the argument of $f_{j-1}$ in \eqref{eq:g}, we get
\begin{equation}
0<
\frac{\ol{\theta}-\ul{\theta}}{h}-\tilde{\ga}
\le
\frac{\ol{s}-(x_2-x_1)}{h}
\le 
\frac{\ol{\theta}-\ul{\theta}}{h}+\frac{M}{h}.
\end{equation}
provided that $\ul{\theta}\le x_2-x_1\le \ul{\theta}+\tilde{\ga}h$.

Introducing $\omega_j=\min\{f_{j-1}(t)\mid (\ol{\theta}-\ul{\theta})/h-\tilde{\ga}\le t\le (\ol{\theta}-\ul{\theta})/h+M/h\}$, we arrive at the estimate 
\begin{equation*}
	f_{j-1}\left(\frac{\ol{s}-(x_2-x_1)}{h}\right) \ge \omega_j
	\times
	\ind_{\ul{\theta}\le x_2-x_1\le \ul{\theta}+\tilde{\ga} h}
\end{equation*}
which is uniform in $\ol{s}$, and $\omega_j>0$ whenever $j>(\ol{\theta}-\ul{\theta})/h+M/h+1$.

If we put all together, for $C_h = \{(\mathbf{x},m)\in\U\mid x_1\in [-h,0];\,\ul{\theta}\le x_2-x_1 \le \ul{\theta}+\tilde{\ga} h;\,x_3\in [0,h];\,x_4-x_3\le\ol{\theta};\,m\ge 2\}$, we obtain
\begin{align*}
Q(y,B)\ge 
	\int_{B\cap C_h}
	\frac{\alpha^m}{h^4 2^m} 
	f_{m-1}\left(
	\frac{x_4-x_3-x_2}{h}
	\right)
	\times
	\sum_{j=2}^\infty\frac{\alpha^j \omega_j}{2^j}
	\,\la (\dint\mathbf{x})\otimes \delta (\dint m),
\end{align*}
where the right hand-side depends only on $\alpha, M, h, \ul{\theta}, \ol{\theta}$, but not on $\ol{s}$ hence \eqref{eq:UDoeblin} holds with
\begin{equation}
	\tilde{\ka} (B)=\int_{B\cap C_h}
	\frac{\alpha^m}{h^4 2^m} 
	f_{m-1}\left(
	\frac{x_4-x_3-x_2}{h}
	\right)
	\times
	\sum_{j=2}^\infty\frac{\alpha^j \omega_j}{2^j}
	\,\la (\dint\mathbf{x})\otimes \delta (\dint m)
\end{equation}
which is obviously a non-zero Borel measure on $\B (\U)$.

To sum up, we showed that the chain $(U_n)_{n\ge 1}$ satisfies the uniform minorization condition \eqref{eq:UDoeblin}, and 
thus by Theorem 16.2.2 in \cite{mt}, there exist a probability measure, independent of $(S_0,X_0)$, such that
$\law{U_n}\to\upsilon$ in total variation. Moreover, by Theorem 17.0.1 in \cite{mt}, for bounded measurable 
functionals of $U_n$ the law of large numbers holds as it is stated in Theorem \ref{cor:LLN}. 
(Actually, even a central limit theorem could be established.) This completes the proof.
\end{proof}

\begin{proof}[Proof of Theorem \ref{thm:ovi}]
	

The idea of the proof is similar to that of Theorem \ref{cor:LLN}, but the details 
are somewhat simpler. We only sketch the main steps. 

We consider the process $Z_n = (X_{L_n}, S_{L_n})$, $n>1$ which is obviously a time-homogeneous Markov chain
on the state space $\triangle:=\{(x,s)\in (0,M]^2\mid x\ge s\}$. In what follows, we prove that chain $(Z_n)_{n>1}$ satisfies a minorization condition similar to \eqref{eq:UDoeblin} in the proof of Theorem
\ref{cor:LLN}. More precisely, we aim to show that there exist a non-zero Borel measure $\beta :\B (\triangle)\to [0,\infty)$ such that for all $z\in\triangle$ and $A\in\B(\triangle)$,
\begin{equation}\label{eq:minZ}
	Q(z,A)\ge\beta (A),
\end{equation}
where $Q:\triangle\times\B (\triangle)\to [0,1]$ is the transition kernel of the chain $(Z_n)_{n\in\N}$.

Let $(\xi_n)_{n\in\N}$, $(\eta_n)_{n\in\N}$, and $(\tau_n)_{n\in\N}$ be as in the the proof of Theorem
\ref{cor:LLN}. For $z=(\ol{x},\ol{s})\in\triangle$ and $A\in\B (\triangle)$ arbitrary and fixed,
by the tower rule, we have
\begin{align}\label{eq:ovi:decomp}
	\begin{split}
		Q(z,A) &= \P \left(Z_{n+1}\in A\mid Z_n =z\right) = \sum_{k=0}^{\infty} \P \left(Z_{n+1}\in A\mid Z_n =z,\,\tau_n=k\right) \alpha^k (1-\alpha)
		 \\
		&\ge 
		\sum_{k=2}^{\infty} \sum_{l=2}^{k}\P \left(Z_{n+1}\in A,\,L_{n+1}= L_n+l\mid S_{L_n} =\ol{s},\,\tau_n=k\right) \alpha^k (1-\alpha),
		\end{split}
	\end{align}
where we applied the same principles as in the derivation of \eqref{eq:decomp}. 
	
Again by introducing the auxiliary random walk	$W_n = \ol{s}+h\sum_{j=1}^{n} (2\xi_j-1)$, and the associated quantities $L_n^W$, $Z_n^W=(h(2\xi_{L_n^W}-1),W_{L_n^W})$, $n\in\N$, where $L_0^W:=0$ and for $n\in\N$,
$L_{n+1}^W:=\inf \{k>L_n^W\mid W_{k-1}\le 0<W_k\}$, for fixed $2\le l\le k$, we can write
\begin{align}\label{eq:alma}
	\begin{split}
		&\P \left(Z_{n+1}\in A,\,L_{n+1}= L_n+l\mid S_{L_n} =\ol{s},\,\tau_n=k\right) = \P (Z_1^W\in A,\,L_1^W=l) \\
		&= \frac{1}{2^l}\P \left((h(2\xi_l-1),W_{l})\in A,\,W_{l-1}\le 0<W_l \middle|
		\,\bigcap_{i=1}^{l-1} \{\xi_i<1/2\},\,\{\xi_l\ge 1/2\}
		\right)
		\end{split}
\end{align}	
where similarly to \eqref{eq:downup}, we have taken into account trajectories decreasing in $l-1>0$ steps
and increasing only in the $l$-th step. For the conditional probability, we have
\begin{align*}
	\P &\left((h(2\xi_l-1),W_{l})\in A,\,W_{l-1}\le 0<W_l \middle|
	\,\bigcap_{i=1}^{l-1} \{\xi_i<1/2\},\,\{\xi_l\ge 1/2\}
	\right)= \\
	&=\int_0^1 \int_0^\infty \ind_{(hv,\ol{s}-hu+hv)\in A}\times 
	\ind_{\ol{s}-hu\le 0}\times f_{l-1}(u)\dint u \dint v \\
	&=\frac{1}{h^2}\int_0^h \int_0^{x_1} \ind_{(x_1,x_2)\in A}
	\times f_{l-1}\left(\frac{\ol{s}-(x_2-x_1)}{h}\right)\dint x_2\dint x_1 \\
	&=\int_{(0,h]^2\cap A}  f_{l-1}\left(\frac{\ol{s}-(x_2-x_1)}{h}\right)\,\la (\dint\mathbf{x})
\end{align*}
where $f_m:[0,\infty)\to[0,\infty)$ is the probability density function of the sum
of $m\ge 1$ independent random variables each having a uniform distribution on $[0,1]$, 
$\mathbf{x}=(x_1,x_2)$, and $\la$ denotes the Lebesgue measure on $\R^2$.
	 
Notice that if $(x_1,x_2)\in(0,h]\cap\triangle$ then $0\le x_1-x_2\le\min (M,h)$, moreover $0<\ol{s}\le M$
hence we have $0\le (\ol{s}-(x_2-x_1))/h\le M/h+\min (M/h,1)$, and thus we obtain
\begin{equation}
	f_{l-1}\left(\frac{\ol{s}-(x_2-x_1)}{h}\right)\ge \omega'_{l}\times \ind_{\tilde{\ga}'\min (M,h)\le x_1-x_2},
\end{equation}
where $\tilde{\ga}'$ can be any fixed number in $(0,1)$, and $\omega_l' = \inf \{f_{l-1}(t)\mid \tilde{\ga}'\min (M/h,1)\le t\le M/h+\min (M/h,1)\}$ that is a positive number not depending on $\ol{s}$ whenever $l>M/h+1$.

If we put all together, we obtain the following lower estimate
\begin{align*}
	Q(z,A)&\ge\int_{(0,h]^2\cap A}  \sum_{k=2}^{\infty}\sum_{l=2}^{k}\frac{\alpha^k (1-\alpha)}{2^l}\omega'_{l}\times \ind_{\tilde{\ga}'\min (M,h)\le x_1-x_2}\,\la (\dint\mathbf{x}) \\
	&=\int_{(0,h]^2\cap A}  
	\ind_{\tilde{\ga}'\min (M,h)\le x_1-x_2} \times
	\sum_{l=2}^{\infty}\frac{\alpha^l \omega'_{l}}{2^l}
	\,\la (\dint\mathbf{x}),
\end{align*}
where the right hand-side depends only on $\alpha,M,h$ and the choice of $\tilde{\ga}'\in (0,1)$, but not depends on $z$, moreover
\begin{equation*}
	\B (\triangle)\ni A\mapsto \beta(A):=\int_{(0,h]^2\cap A}  
	\ind_{\tilde{\ga}'\min (M,h)\le x_1-x_2} \times
	\sum_{l=2}^{\infty}\frac{\alpha^l \omega'_{l}}{2^l}
	\,\la (\dint\mathbf{x})
\end{equation*} 
defines a non-zeros Borel measure on $\B (\triangle)$, and thus \eqref{eq:minZ} holds with this $\beta$.

\smallskip
To sum up, we proved that the chain $(Z_n)_{n\in\N}$ satisfies the uniform minorization condition, and thus
it admits a unique invariant probability measure $\pi$ such that $\law{Z_n}\to \pi$ at a 
geometric rate in total variation as $n\to\infty$ (See for example 
Lemma 18.2.7 and Theorem 18.2.4 in \cite{EricMarkov2018})
which completes the proof of Theorem \ref{thm:ovi}. 

\end{proof}

\begin{remark}\label{inno}{\rm We explain a seemingly innocuous but actually powerful extension of 
some of the arguments above. Let $X_{t}$ be a \emph{time inhomogeneous} Markov chain with kernels
$P_{n}$, $n\geq 1$ such that
$$
P(X_{n+1}\in A|X_{n}=x)=P_{n+1}(x,A),\ x\in\mathcal{X},\ A\in\mathcal{B}(\mathbb{R}),\ n\in\mathbb{N}.
$$
Let Assumption \ref{as:jmpDoeblin} hold for each $P_{n}$, $n\in\mathbb{N}$ (with the same $\alpha,h$){}
and let Assumption \ref{as:zeroMean} hold for each $P_{n}$. 
In this case, $U_{t}$, $t\geq 2$ will be a \emph{time-inhomogeneous} Markov chain and repeating
the argument of the proof for Theorem \ref{cor:LLN} establishes the existence of a probability
$\tilde{\kappa}$ such that $Q_{n}(x,A)\geq \tilde{\kappa}(A)$, $x\in\mathcal{U}$, $A\in\mathcal{B}(\mathcal{U})$, $n\geq 3$,
where $Q_{n+1}(x,A)=P(U_{n+1}\in A|U_{n}=x)$ is the transition kernel of $U$.}
\end{remark}


\begin{remark}
{\rm One could treat certain stochastic volatility-type models where $X_{t}=\sigma_{t}\varepsilon_{t}$
with $\varepsilon_{t}$ i.i.d.\ and $\sigma_{t}$ a Markov process. In this case $X_{t}$ is not Markovian
but the pair $(X_{t},\sigma_{t})$ is. An extension to even more general non-Markovian $X_{t}$ also seems possible.
We do not pursue these generalizations here.}
\end{remark}

\section{Conclusions and future work}\label{sec:con}

It would be desirable to remove the (one-sided) boundedness assumption on the state space 
of $X_{t}$ and relax the minorization condition \eqref{minocond} to some kind of local minorization. Due to the rather
complicated dynamics of $U_{t}$ such extensions do not appear to be straightforward at all. 

Removing the boundedness hypothesis on $u,p$ in Section \ref{sec:trading} would also be desirable but looks challenging. 

Replacing the constant drift $\mu$ by a functional of $X_{t}$ would also significantly extend the 
family of models in consideration.

An adaptive optimization of the thresholds $\underline{\theta},\overline{\theta}$ could be performed
using the Kiefer-Wolfowitz algorithm, as proposed in Section 6 of \cite{kinga}. There are a number of technical
conditions (e.g.\ mixing properties, smoothness of the laws) that need to be checked for applying 
\cite{kinga} but the ergodic
properties established in this article strongly suggest that this programme indeed can be carried out.

Extensions to non-Markovian stochastic volatility models (see \cite{comte-renault,rough}) seem feasible but require
further technicalities.


\end{document}